\def\BState{\State\hskip-\ALG@thistlm}
\theoremstyle{plain}
\newtheorem{theorem}{Theorem}
\newtheorem{lemma}{Lemma}
\newtheorem{corollary}{Corollary}
\newtheorem{proposition}{Proposition}
\theoremstyle{definition}
\newtheorem{remark}{Remark}
\newtheorem{example}{Example}
\begin{document}

\title{\large \bf On linear version of an elementary group theory result
\thanks{\textit{ Keywords and phrases.} Cyclic group, Field extension, Linear covering, Primitive subspace theorem.}
\thanks {2010 Mathematics Subject Classification: 12E20, 12F10.}
}

\author{{\normalsize{\sc Mohsen Aliabadi}}\\
{\footnotesize{\it Department of Mathematics, Statistics, and Computer Science, University of Illinois at Chicago,   }}\\
{\footnotesize{\it 851 S. Morgan St, Chicago, IL 60607, USA}}\\
{\footnotesize{E-mail address: $\mathsf{maliab2@uic.edu}$}}\\
}
\date{}

\maketitle
\begin{abstract}
Given a cyclic group $G$ of order $p^r$, where $p$ is a prime and $r\in\mathbb{N}$. It is  well-known that the order of its greatest proper subgroup
$\psi(G)$ and the number of   its generating elements $\phi(G)$ satisfy  $\psi(G)+\phi(G)=p^r$. In this paper, we study a linear version of this group theory theorem for primitive subspaces in a field extension using some linear algebra tools. We also briefly discuss partitions of finite fields by using their primitive subspaces. 

\textbf{Notation:} The cardinality of a set $S$ shall  be denoted by $\#S$.   For a vector space $V$ over a field $K$, we denote its dimension by $\dim_KV$. The number of positive divisors of a positive integer $n$ is denoted by $d(n)$. For simplicity of notation,  $A\subset B$ means that $A$ is a proper subset of $B$. 
\end{abstract}

\section{Introduction}
There has been a vast literature as well as ongoing investigations on linear
analogues of existing results in group theory. As a case  in point, a recent result  due to Bachoc et al \cite{2}, which gives the linear version of a theorem of Kneser on the size of   certain subsets of an abelian group. One can also see \cite{1} in which the linear analogue of a group theory result is studied. In this paper, we start by considering the following scenario in group theory.

Let $\mathbb{Z}/p^r\mathbb{Z}$ denote the cyclic group of order $p^r$, where $p$ is a prime and $r\in \mathbb{N}$. Denote by $\psi(\mathbb{Z}/p^r\mathbb{Z})$ the order of greatest proper subgroup of $\mathbb{Z}/p^r\mathbb{Z}$ and denote by $\phi(\mathbb{Z}/p^r\mathbb{Z})$ the number of generators of $\mathbb{Z}/p^r\mathbb{Z}$. Since $p$-groups have subgroups of index $p$, then $\psi(\mathbb{Z}/p^r\mathbb{Z})=p^{r-1}$. Also, it is well-known that $\phi(\mathbb{Z}/p^r\mathbb{Z})=\varphi(p^r)$, where $\varphi$ stands for the Euler's totient function. According to the Euler's product formula, $\varphi(p^r)=p^r\left(1-\frac{1}{p}\right)=p^r-p^{r-1}$. Therefore,
\begin{align} \label{f1}
\psi(\mathbb{Z}/p^r\mathbb{Z})+\phi(\mathbb{Z}/p^r\mathbb{Z})=p^r.
\end{align}

Let $K\subset F$ be a finite separable field extension with $[F:K]=n$.  Let $A$ be a $K$-subspace of $F$. We say that   $A$ is a \textit{primitive} $K$-subspace of $F$ if $K(a)=F$, $\forall a\in A\setminus\{0\}$. (Note that since $K\subset L$ is a finite separable extension, then it is simple, which justifies the existence of primitive subspaces.)

\begin{example}
Consider the field extension $\mathbb{C}/\mathbb{R}$ and the $\mathbb{R}$-subspace $W=\langle i\rangle$ of $\mathbb{C}$, where $\langle i\rangle$ stands for the $\mathbb{R}$-subspace of $\mathbb{C}$ generated by $i$. Then adjoining any non-zero element of $W$ to $\mathbb{R}$ covers the whole $\mathbb{C}$. So $W$ is a primitive $\mathbb{R}$-subspace of $\mathbb{C}$.
\end{example}
Motivated by the aforementioned theorem on the number of generating elements of $\mathbb{Z}/p^r\mathbb{Z}$, one naturally inquires about the size of a primitive subspace $A$. In
\cite{1.2} it is proven that $\dim_KA\leq n-\psi(F,K)$, where 
\[\psi(F,K)=\max\left\{[M:K]:\, M\text{ is a proper intermediate field of }K\subset F\right\}.\]
(Note that in the above definition, ``proper intermediate field of $K\subset F$"   means $K\subseteq M\subset F$. Hence we have $1\leq\psi(F,K)<[F:K]$.)

More generally, the following results in \cite{1.2} provides the linear version of (\ref{f1}).

\begin{proposition}\label{th2.1}
Let $F, K, n$ and $\psi(F,K)$ be as above. Assume that $K$ is infinite and  $K\subset F$ is simple.  Then 
\begin{align}\label{eq1}
\psi(F,K)+\phi(F,K)=n,
\end{align}
where
\[\phi(F,K)=\max\left\{\dim_KV:\, V\text{ is a primitive }K-\text{subspace of }F\right\},\]
namely, $\phi(F,K)$ denotes the dimension of the largest primitive subspace.
\end{proposition}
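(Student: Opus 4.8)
The plan is to translate the statement into a purely linear-algebraic transversality problem and then settle it by a greedy construction. The starting point is the observation that a $K$-subspace $V\subseteq F$ fails to be primitive precisely when some nonzero $a\in V$ satisfies $K(a)\subsetneq F$, i.e. when $a$ lies in a proper intermediate field (namely $K(a)$ itself). Thus $V$ is primitive if and only if $V$ meets every proper intermediate field of $K\subset F$ only in $0$. Since $K\subset F$ is simple, by the theorem of Steinitz there are only finitely many intermediate fields; let $M_1,\dots,M_k$ be the maximal proper ones. As every proper intermediate field sits inside some $M_i$, primitivity of $V$ is equivalent to $V\cap M_i=\{0\}$ for all $i$, where each $M_i$ is a $K$-subspace with $\dim_K M_i=[M_i:K]\le\psi(F,K)$ and at least one of them attains $\psi(F,K)$.

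With this reformulation, the inequality $\psi(F,K)+\phi(F,K)\le n$ is exactly the bound $\dim_K A\le n-\psi(F,K)$ already established in \cite{1.2}, so the whole content of (\ref{eq1}) is the reverse inequality: I must exhibit a $K$-subspace $V$ of dimension $n-\psi(F,K)$ with $V\cap M_i=\{0\}$ for every $i$. I would build such a $V$ one dimension at a time. Suppose a subspace $V_j$ of dimension $j$ has already been constructed with $V_j\cap M_i=\{0\}$ for all $i$. Then $V_{j+1}=V_j+\langle v\rangle$ again meets every $M_i$ trivially as soon as $v\notin\bigcup_{i=1}^{k}(M_i+V_j)$; indeed a nonzero element of $(V_j+\langle v\rangle)\cap M_i$ can be written $\alpha v+w$ with $w\in V_j$, and since $\alpha=0$ would force $w\in V_j\cap M_i=\{0\}$, one gets $\alpha\neq0$ and hence $v\in M_i+V_j$.

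The crux is then a general-position argument. Because $V_j\cap M_i=\{0\}$, each subspace $M_i+V_j$ has dimension $\dim_K M_i+j\le\psi(F,K)+j$, which is strictly less than $n$ as long as $j\le n-\psi(F,K)-1$. Hence, throughout the construction, each $M_i+V_j$ is a proper subspace of $F$. The key lemma I would invoke is that a vector space over an infinite field is never a finite union of proper subspaces (the standard line/pigeonhole argument, using that $K$ is infinite). Applying it to the finite family $\{M_i+V_j\}_{i}$ produces a vector $v\in F\setminus\bigcup_i(M_i+V_j)$, and the induction proceeds. Starting from $V_0=\{0\}$ and iterating $n-\psi(F,K)$ times yields a primitive subspace of the desired dimension, which closes the gap and forces equality in (\ref{eq1}).

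The two hypotheses enter at exactly two places, and I expect the dimension bookkeeping at the last step to be the only delicate point. Simplicity (equivalently, separability via the primitive element theorem) is what guarantees finitely many $M_i$, so that the union lemma is applicable; infiniteness of $K$ is what makes that lemma true. The tightness is visible in the final inductive step $j=n-\psi(F,K)-1$, where $\dim_K(M_i+V_j)\le n-1$ is exactly the largest value still permitting a fresh $v$: this is precisely why the construction halts at dimension $n-\psi(F,K)$ and not higher, matching the upper bound from \cite{1.2} and yielding $\phi(F,K)=n-\psi(F,K)$.
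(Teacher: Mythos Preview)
Your proposal is correct and follows essentially the same approach as the paper: both reduce primitivity to transversality with the finitely many proper intermediate fields, then greedily build $V$ one dimension at a time by choosing $v\notin\bigcup_i(M_i+V_j)$ via the lemma that a vector space over an infinite field is not a finite union of proper subspaces, while the upper bound comes from the dimension inequality $\dim_K W+[M_1:K]\le n$ for $W$ primitive. The only cosmetic differences are that you work with the \emph{maximal} proper intermediate fields rather than all of them, and you spell out explicitly why $(V_j+\langle v\rangle)\cap M_i=\{0\}$, which the paper leaves implicit.
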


\begin{example}\label{er}
Consider the finite field extension $\mathbb{Q} (\sqrt{2}, \sqrt{3})/\mathbb{Q}$. Then according to Proposition \ref{th2.1} the dimension of greatest primitive $\mathbb{Q}$-subspace of $\mathbb{Q} (\sqrt{2}, \sqrt{3})$ is $2$ as $[\mathbb{Q} (\sqrt{2}, \sqrt{3}):\mathbb{Q}]=4$ and so $\psi (\mathbb{Q} (\sqrt{2}, \sqrt{3}),\mathbb{Q})=2$.
\end{example}

\begin{remark}
From now on, we call Proposition \ref{th2.1} as ``the primitive subspace theorem". Note that the linear analogues of ``the  order  of group", ``the order of its largest proper subgroup" and ``the number of generators of a cyclic group" are ``the degree of field extension, ``the degree of its largest proper intermediate subfield" and ``the dimension of largest primitive vector space", respectively.
\end{remark}
In this paper, we will generalize the primitive subspace theorem. We will also give a positive answer to the question inquired as Remark $5$ in \cite{1.2} whether (\ref{eq1}) holds in case the base field is finite, for some special cases.

In the proof of our main result, we utilize a linear algebra theorem, which asserts that a
vector space over an infinite field cannot be written as a finite union of its
proper subspaces. 

\section{Preliminaries}
In what follows, we  provide a   proof for the well-known linear algebra result that ``finite union of lower-dimensional subfields of number fields (considered as vector spaces over $\mathbb{Q}$) do not cover the field". Our proof, to the best of our knowledge, has not been provided before in literature. One can see \cite[Chapter 8]{2n} and \cite[1.10.3-6]{*} for existing proofs. 
\begin{lemma}\label{l1}
Let $V$ be a vector space over an infinite field $K$. Then $V$ is not a union of a finite collection of proper subspaces.
\end{lemma}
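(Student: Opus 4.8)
The plan is to prove the contrapositive-flavored statement by showing that if finitely many proper subspaces covered $V$, we could derive a contradiction using the infinitude of $K$. I would argue by induction on the number $m$ of subspaces in the covering, but the cleanest route is a direct argument exploiting a single well-chosen line (one-dimensional affine flat) inside $V$. Suppose toward a contradiction that $V = W_1 \cup \cdots \cup W_m$ where each $W_i \subsetneq V$ is a proper subspace and $m$ is minimal. Minimality guarantees that none of the $W_i$ is contained in the union of the others, so for each $i$ there is a vector lying in $W_i$ but in no other $W_j$.

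First I would pick a vector $u \in W_1 \setminus \bigcup_{j \neq 1} W_j$ and a vector $v \in V \setminus W_1$ (the latter exists because $W_1$ is proper). Then I would examine the infinite family of vectors $v + \lambda u$ as $\lambda$ ranges over $K$. The key observation is a pigeonhole step: since $K$ is infinite but there are only $m$ subspaces, some single subspace $W_k$ must contain $v + \lambda u$ for at least two distinct scalars $\lambda_1 \neq \lambda_2$. The heart of the argument is then to show this forces a contradiction. From $v + \lambda_1 u \in W_k$ and $v + \lambda_2 u \in W_k$, subtracting gives $(\lambda_1 - \lambda_2) u \in W_k$, and since $\lambda_1 - \lambda_2 \neq 0$ is invertible in the field, $u \in W_k$. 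Combined with $v + \lambda_1 u \in W_k$ this yields $v \in W_k$.

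I would then close the loop by tracking which index $k$ can occur. Because $u$ was chosen to lie in $W_1$ and in no other $W_j$, the containment $u \in W_k$ forces $k = 1$. But then $v \in W_1$, contradicting the choice of $v \notin W_1$. This completes the contradiction and shows no such finite covering by proper subspaces can exist. The main obstacle, and the only place where the infinitude of $K$ is genuinely used, is the pigeonhole step producing two scalars mapping into the same subspace; everything else is elementary linear algebra over the field, and the cancellation $(\lambda_1 - \lambda_2)u \in W_k \Rightarrow u \in W_k$ is exactly where we need $K$ to be a field (so that nonzero scalars are invertible) rather than merely a ring.
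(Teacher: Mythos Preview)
Your proof is correct, but it follows a genuinely different route from the paper's. You run the classical \emph{affine line} argument: pick $u\in W_1\setminus\bigcup_{j\neq 1}W_j$ and $v\notin W_1$, pigeonhole two points of the line $\{v+\lambda u:\lambda\in K\}$ into a single $W_k$, subtract to force $u\in W_k$ (hence $k=1$) and then $v\in W_1$, a contradiction. The paper instead selects a special vector $a_i$ from \emph{each} $U_i$, forms the polynomial curve $f(t)=\sum_{i=1}^m a_i t^i$, pigeonholes $m$ distinct values $f(t_1),\dots,f(t_m)$ into one subspace $U_1$, and then inverts the Vandermonde matrix $[(t_j)^i]$ to conclude that every $a_i$ lies in $U_1$, contradicting $a_2\notin U_1$. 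Your approach is shorter and more elementary (two scalars suffice, and no determinants appear); the paper's approach is the novelty they advertise, trading simplicity for an argument that recovers all the distinguished vectors $a_i$ simultaneously via linear algebra on the Vandermonde system.
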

\begin{proof}
Suppose $V$ is the union of $m$ proper subspaces $U_i$ for $1\leq i\leq m$, where $m$ is as small as possible, and thus $m\geq2$. By the minimality of $m$, each subspace $U_i$ contains a vector $a_i$, such that $a_i$ is not contained in $U_j$ for any subscript $j\neq i$. (Otherwise, we could delete the subspace $U_i$ from the union.) Fix the vectors $a_i$ for $1\leq i\leq m$.

For $t\in K$, write 
\[f(t)=\sum_{i=1}^m a_i t^i.\]
Since $K$ is infinite, we can choose $m$ distinct elements $t_j\in K$ such that the vectors $b_j=f(t_j)$ all lie in the same subspace, say $U_1$. We thus have 
\[b_j=f(t_j)=\sum_{i=1}^m a_i(t_j)^i\in U_1\]
for $1\leq j\leq m$.

We thus have the matrix equation $\mathbf{aM=b}$, where 
\[\mathbf{a}=(a_1,a_2,\ldots,a_m)\quad\mathrm{and}\quad\mathbf{b}=(b_1,b_2,\ldots,b_m),\]
and $\mathbf{M}=[(t_j)^i]$.

The Vandermonde matrix $\mathbf{M}$ is invertible because the $t_j$ are distinct, so we have $\mathbf{a=bM}^{-1}$, and thus each component $a_i$ of $\mathbf{a}$ is a $K$-linear combination of the vectors $b_j$. Since $b_j\in U_1$ for all $j$, it follows that $a_i\in U_1$ for all $i$. But $a_2\notin U_1$, so this is a contradiction.
\end{proof}

\begin{remark}
It is worth pointing out that Lemma \ref{l1} does not hold if the base field $K$ is finite. To see this, just consider the vector space $\mathbb{F}_{2}^{2}$ over $\mathbb{F}_{2}$, which is the union of its three proper subspaces.
\end{remark}

A key ingredient of the proof of the analogue of the primitive subspace theorem for finite fields is a theorem on covering a given vector space over a finite field by its subspaces.

Let $V$ be a finite-dimensional vector space over $\mathbb{F}_{q}$, where $\mathbb{F}_{q}$ stands for finite field of order $q$, which  $q=p^r$ for some prime $p$ and  $r\in\mathbb{N}$. We say  a collection $\{W_i\}_{i\in I}$ of proper $K$-subspaces of $V$ is  a \textit{linear covering} of $V$ if $V=\underset{i\in I}{\bigcup} W_i$. 
 The \textit{linear covering number} $\mathrm{LC}(V)$ of a vector space $V$ of dimension at least $2$ is the least cardinality $\#$I of a linear covering $\{W_{i}\}_{i \in I}$ of $V$. Under the condition $\dim_KV\geq2$, which is the sufficient and necessary condition for the existence of linear coverings, we have the following result from \cite{(a.1)}.
\begin{proposition}\label{thB}
If $\dim_KV$ and $\#K$ are not  both infinite, then  $\mathrm{LC}(V)=\#K+1$.
\end{proposition}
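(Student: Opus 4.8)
The plan is to prove the two bounds $\mathrm{LC}(V)\le \#K+1$ and $\mathrm{LC}(V)\ge \#K+1$ separately, the first being an explicit construction and the second being the substantive part. For the upper bound I would use that $\dim_K V\ge 2$ to fix a surjective $K$-linear map $\pi\colon V\to K^2$ (project onto two basis coordinates). The $\#K+1$ one-dimensional subspaces (lines through the origin) of $K^2$ cover $K^2$, so their preimages $W_\ell=\pi^{-1}(\ell)$ form a family of $\#K+1$ proper $K$-subspaces of $V$ — proper because $\ell\neq K^2$ — whose union is $\pi^{-1}(K^2)=V$. This exhibits a linear covering of cardinality $\#K+1$, giving $\mathrm{LC}(V)\le \#K+1$.

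The lower bound rests on one reduction. Suppose $\{W_i\}_{i\in I}$ is any linear covering and suppose I can locate a two-dimensional subspace $U\subseteq V$ that is \emph{not} contained in any $W_i$. Then each intersection $W_i\cap U$ is a \emph{proper} subspace of $U$, hence either $\{0\}$ or one of the $\#K+1$ lines of $U$. For every nonzero $v\in U$ there is an $i$ with $v\in W_i\cap U$, and since $W_i\cap U$ is then a one-dimensional subspace containing $v$ it must equal $\langle v\rangle$. Letting $v$ range over representatives of the $\#K+1$ projective points of $U$ forces $\#K+1$ distinct subspaces to occur among the $W_i\cap U$, whence $\#I\ge \#K+1$. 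So everything reduces to producing such a plane $U$.

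To produce $U$ I would split on $\#K$. When $K$ is finite, any covering of cardinality $\le \#K$ is finite, so I pass to an irredundant (minimal) subcovering; by irredundancy each remaining $W_i$ contains a private vector $a_i\notin\bigcup_{j\neq i}W_j$, and then $U=\langle a_1,a_2\rangle$ meets no $W_i$ in all of $U$ (indeed $a_1\notin W_i$ for $i\neq 1$ and $a_2\notin W_1$), so the reduction applies and yields the contradiction $\#I\ge \#K+1$. When $K$ is infinite, the hypothesis forces $\dim_K V$ to be finite, and here I would induct on $\dim_K V$: the base case $\dim_K V=2$ is exactly the line count of the previous paragraph with $U=V$, while in the inductive step either some plane escapes all the $W_i$ (and the reduction finishes the job) or every plane lies inside a single member, in which case fixing $a_1\neq 0$ and passing to $\overline V=V/\langle a_1\rangle$ produces a covering of a space of dimension $\dim_K V-1\ge 2$ by proper subspaces $\overline{W_i}$, to which the inductive hypothesis applies.

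The main obstacle is precisely the degenerate configuration in which every two-dimensional subspace is swallowed by one of the $W_i$, together with the bookkeeping required for infinite index sets; this is where the hypothesis that $\dim_K V$ and $\#K$ are not both infinite is indispensable. Note by Lemma \ref{l1} that over an infinite $K$ a covering must itself be infinite, so the finite-subcovering trick is unavailable there and finite-dimensionality is what makes the induction terminate. The necessity of the hypothesis is visible from the chain $V=\bigcup_{n\ge 1}\langle e_1,\dots,e_n\rangle$: an infinite-dimensional space over an infinite field is covered by only countably many proper subspaces, which can be strictly fewer than $\#K$, so the clean formula $\mathrm{LC}(V)=\#K+1$ genuinely breaks down in the excluded regime.
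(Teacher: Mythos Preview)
The paper does not prove Proposition~\ref{thB}; it is quoted from Luh's note \cite{(a.1)} and used as a black box. So there is no in-paper argument to compare against, and your proposal has to be assessed on its own.

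Your outline is essentially correct. The upper bound via a surjection $\pi\colon V\to K^2$ and pulling back the $\#K+1$ lines is the standard construction and works in both the finite and infinite cases (with $\#K+1=\#K$ as cardinals when $K$ is infinite). The lower-bound reduction to a two-dimensional $U$ not contained in any $W_i$ is the right idea: counting the projective points of $U$ forces $\#I\ge\#K+1$. Your finite-$K$ case is complete: a covering of size at most $\#K$ is finite, an irredundant subcovering has at least two members, the private vectors $a_1,a_2$ are independent (since $a_2\notin W_1\ni a_1$), and $U=\langle a_1,a_2\rangle$ escapes every $W_i$.

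One step in the infinite-$K$ inductive case needs to be tightened. When every plane lies in some $W_i$ and you pass to $\overline V=V/\langle a_1\rangle$, the image of $W_i$ in $\overline V$ is guaranteed to be a \emph{proper} subspace only when $a_1\in W_i$; if $a_1\notin W_i$ and $\dim_K W_i=n-1$, the image is all of $\overline V$. The fix is to first observe that the subfamily $I'=\{i:a_1\in W_i\}$ already covers $V$: for any $v\notin\langle a_1\rangle$ the plane $\langle a_1,v\rangle$ sits inside some $W_i$, and that $W_i$ contains $a_1$. Then quotient only those $W_i$ with $i\in I'$, obtaining a genuine linear covering of $\overline V$ by proper subspaces, and the inductive hypothesis gives $\#I\ge\#I'\ge\#K+1$. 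With this refinement the argument goes through.
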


Taking advantage of Proposition \ref{thB} and implementing a similar technique used in the proof of Theorem 7 in \cite{1.2} (mentioned as Proposition \ref{th2.1} in this paper) one can prove the analogue of Proposition \ref{thB} for finite base field in some special cases depending on the degree of extension. The general case of the primitive subspace theorem for finite fields is left incomplete.
\\
In our proof, Proposition \ref{thB} serves as a substitute for Lemma \ref{l1} used for infinite field case. We will also use a classical result in field theory which states that for any intermediate subfield $L$ of $\mathbb{F}_{q} \subset \mathbb{F}_{q^{n}}$, we have $\# L= q^{d}$, where $d\mid n$.
\begin{theorem}\label{th2.2}
Consider the finite field extension $\mathbb{F}_{q}\subset \mathbb{F}_{q^{n}}$, where $n\in\mathbb{N}$, and $q=p^r$ for some prime $p$ and $r\in\mathbb{N}$. Let $d(n)<q+2$, where $d(n)$ denotes the number of positive divisors of $n$. Then 
\begin{align}\label{eq2}
\psi(\mathbb{F}_{q^{n}},\mathbb{F}_{q})+\phi(\mathbb{F}_{q^{n}},\mathbb{F}_{q})=n.
\end{align}
Here $\psi$ and $\phi$ are defined in the same way  as in  Theorem \ref{th2.1}.
\end{theorem}
\begin{proof}
Assume that $\{M_i\}_{i=1}^m$ is the family of all proper intermediate subfields of $\mathbb{F}_{q}\subset \mathbb{F}_{q^{n}}$. Clearly $m+1=d(n)$. Define $\mathcal{A}$ as the set of all primitive $\mathbb{F}_q$-subspaces of $\mathbb{F}_{q^{n}}$.
(Note that $\mathcal{A}\neq\varnothing$ because $\mathbb{F}_{q^n}^*$ is a cyclic group and adjoining any generator $a$ of $\mathbb{F}_{q^n}^*$ to $\mathbb{F}_q$ will cover $\mathbb{F}_{q^n}$, so $\langle a\rangle\in\mathcal{A}$, where $\langle a\rangle$ stands for the $\mathbb{F}_q$-subspace generated by $a$.)
Without loss of generality assume that $\psi(\mathbb{F}_{q^{n}},\mathbb{F}_{q})=[M_1:\mathbb{F}_{q}]$. Choose $W\in\mathcal{A}$ for which $\dim_{\mathbb{F}_{q}}W=\phi(\mathbb{F}_{q^{n}},\mathbb{F}_{q})$. According to Proposition \ref{thB} and the fact that $d(n)<q+2$, we have $\mathbb{F}_{q^{n}}\neq\overset{m}{\underset{i=1}{\bigcup}}M_i$. Choose $x_1\in\mathbb{F}_{q^{n}}\setminus\overset{m}{\underset{i=1}{\bigcup}}M_i$ and define $M_{i,1}=M_i\oplus\langle x_1\rangle$, for $1\leq i\leq m$. So $\{M_{i,1}\}_{i=1}^m$ is a finite family of proper $\mathbb{F}_{q}$-subspaces of $\mathbb{F}_{q^{n}}$. Likewise, $\mathbb{F}_{q^{n}}\neq\overset{m}{\underset{i=1}{\bigcup}}M_{i,1}$. Choose $x_2\in F\setminus \overset{m}{\underset{i=1}{\bigcup}}M_{i,1}$ and define $M_{i,2}=M_{i,1}\oplus\langle x_2\rangle$, for $1\leq i\leq m$. Similarly, we get $\mathbb{F}_{q^{n}}\neq\overset{m}{\underset{i=1}{\bigcup}}M_{i,2}$. Continuing in this manner, we obtain a family of $\mathbb{F}_{q}$-subspaces $M_{i,j}$ of $\mathbb{F}_{q^{n}}$, $1\leq i\leq m$ and $1\leq j\leq n-\psi(\mathbb{F}_{q^{n}},\mathbb{F}_{q})$. Consider the $\mathbb{F}_{q}$-subspace $V$ of $\mathbb{F}_{q^{n}}$ spanned by $\{x_1,x_2,\ldots,x_{n-\psi(\mathbb{F}_{q^{n}},\mathbb{F}_{q})}\}$. For  any $v\in V\setminus\{0\}$, we have $v\notin \overset{m}{\underset{i=1}{\bigcup}}M_i$. It follows that $\mathbb{F}_{q}(v)=F$. Therefore $V\in\mathcal{A}$. This implies $\phi(\mathbb{F}_{q^{n}},\mathbb{F}_{q})\geq\dim_{\mathbb{F}_{q}}V=n-\psi(\mathbb{F}_{q^{n}},\mathbb{F}_{q})$. We claim that $\phi(\mathbb{F}_{q^{n}},\mathbb{F}_{q})=n-\psi(\mathbb{F}_{q^{n}},\mathbb{F}_{q})$ because otherwise $\dim_{\mathbb{F}_{q}}W>\dim_{\mathbb{F}_{q}}V$ which yields $[M_1:\mathbb{F}_{q}]+\dim_{\mathbb{F}_{q}}W>n$. This follows $M_1\cap W\neq\{0\}$ which is a contradiction (Note that if $x\in(M_1\cap W)$ is nonzero then $x$ cannot be a primitive element of $\mathbb{F}_{q}\subset \mathbb{F}_{q^{n}}$.) Therefore $\dim_{\mathbb{F}_{q}}V=\dim_{\mathbb{F}_{q}}W$. Our proof is complete.
\end{proof}
\begin{example}
Consider the finite field extension $\mathbb{F}_{13^8}/\mathbb{F}_{13^2}$. Then according to Theorem \ref{th2.2} the dimension of greatest primitive $\mathbb{F}_{13^2}$-subspace of $\mathbb{F}_{13^8}$ is 2 as $\left[\mathbb{F}_{13^8}:\mathbb{F}_{13^2}\right]=4$ and $\psi\left(\mathbb{F}_{13^8}:\mathbb{F}_{13^2}\right)=2$.
\end{example}

\begin{remark}
The linear analogue of the group theory theorem presented
as Proposition \ref{th2.1} for infinite base field (or Theorem \ref{th2.2} for finite fields) seems a better result than the original group theory theorem
which only applies to cyclic groups of ``prime power" order, as it applies to all finite degree extensions $F/K$ that are 	``cyclic" (i.e., monogenic as a $K$-algebra). Formulating a reasonable analogue for all finite cyclic groups shall be possible in some ways. 
\end{remark}

\section{Main Result}
In this section we generalize the primitive subspace theorem in case the base field $K$ is infinite. We  begin with the following Lemma:
\begin{lemma}\label{l2}
Let $A$, $B$ and $C$ be subspaces of a vector space $V$, and suppose $A\cap B=0$ and $(A+B)\cap C=0$. Then $(A+C)\cap B=0$.
\end{lemma}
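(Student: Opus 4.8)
The plan is to prove the statement purely by elementary subspace manipulation, showing that any vector in $(A+C)\cap B$ must be zero. First I would take an arbitrary $v \in (A+C)\cap B$ and write it in two ways: since $v \in A+C$, we have $v = a + c$ with $a \in A$ and $c \in C$; and separately $v \in B$. The goal is to force $v = 0$ using the two hypotheses $A \cap B = 0$ and $(A+B)\cap C = 0$.

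The key step is to rearrange $v = a + c$ as $c = v - a = v + (-a)$. Here $v \in B$ and $-a \in A$, so $c \in A + B$. But $c \in C$ as well, so $c \in (A+B)\cap C$, which is $0$ by the second hypothesis. Hence $c = 0$, and therefore $v = a \in A$. Now $v \in A$ and $v \in B$, so $v \in A \cap B = 0$, giving $v = 0$. This shows $(A+C)\cap B = 0$, completing the argument.

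The only point requiring care is the bookkeeping of which sum each vector belongs to; the hardest part, conceptually, is recognizing that the natural move is to isolate the $C$-component $c$ and observe that it lands in $A+B$, so that the hypothesis $(A+B)\cap C = 0$ can be applied. Once that observation is made, the remaining deductions are immediate. No appeal to Lemma \ref{l1}, Proposition \ref{thB}, or finiteness of the field is needed, since this is a statement about abstract subspaces valid over any field. I would not expect any genuine obstacle here, as the result is a short formal consequence of the two independence hypotheses.
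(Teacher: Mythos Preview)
Your proof is correct and follows essentially the same approach as the paper: isolate the $C$-component as $c=v-a\in (A+B)\cap C=0$, then conclude $v=a\in A\cap B=0$. The only cosmetic difference is that the paper phrases it as a contradiction from a nonzero element, while you argue directly that every element of $(A+C)\cap B$ is zero.
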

\begin{proof}
If $(A+C)\cap B\neq0$, let $b\in (A+C)\cap B$, where $b\neq0$, and write $b=a+c$, with $a\in A$ and $c\in C$.

Then $c=b-a$ lies in $C\cap (A+B)=0$, so $c=0$, and thus $a=b$ lies in $A\cap B=0$, and thus $a=0=b$. This contradicts the fact that $b\neq0$.
\end{proof}
\begin{theorem}\label{th4}
Let $K$ be an infinite field and $\mathcal{F}$ be a nonempty finite collection of proper subspaces of a $K$-vector space $V$, where $\dim_K(V)=n<\infty$. Let $s=\max\{\dim_K(S)\mid S\in\mathcal{F}\}$. Let $T\subseteq V$ be a subspace maximal with the property that $T\cap S=0$ for all $S\in\mathcal{F}$. Then $\dim_K(T)=n-s$.
\end{theorem}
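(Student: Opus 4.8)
The plan is to establish the two inequalities $\dim_K(T) \le n - s$ and $\dim_K(T) \ge n - s$ separately. The upper bound is the routine half. Choose $S_0 \in \mathcal{F}$ with $\dim_K(S_0) = s$. Since $T$ meets every member of $\mathcal{F}$ trivially, in particular $T \cap S_0 = 0$, so $T + S_0$ is a direct sum inside $V$ and the dimension formula gives $\dim_K(T) + s = \dim_K(T + S_0) \le n$, that is, $\dim_K(T) \le n - s$. (Because each $S \in \mathcal{F}$ is proper we have $s \le n - 1$, so $n - s \ge 1$ and the statement is not vacuous.)

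The lower bound is where maximality and the hypothesis that $K$ is infinite enter, and it is the crux of the argument. I will argue contrapositively: assuming $\dim_K(T) < n - s$, I will produce a vector $v \notin T$ with $(T + \langle v \rangle) \cap S = 0$ for every $S \in \mathcal{F}$, contradicting the maximality of $T$. The mechanism for the enlargement is Lemma \ref{l2}: setting $A = T$, $B = S$ and $C = \langle v \rangle$, the hypotheses $A \cap B = T \cap S = 0$ and $(A + B) \cap C = (T + S) \cap \langle v \rangle = 0$ give precisely $(T + \langle v \rangle) \cap S = (A + C) \cap B = 0$. Hence it is enough to find one $v$ with $\langle v \rangle \cap (T + S) = 0$ simultaneously for all $S$, which for $v \neq 0$ is equivalent to requiring $v \notin T + S$ for every $S \in \mathcal{F}$.

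To locate such a $v$ I will appeal to Lemma \ref{l1}. Under the assumption $\dim_K(T) \le n - s - 1$, each sum satisfies $\dim_K(T + S) \le \dim_K(T) + \dim_K(S) \le (n - s - 1) + s = n - 1$, so every $T + S$ is a proper subspace of $V$. As $\mathcal{F}$ is finite and $K$ is infinite, Lemma \ref{l1} ensures $V \neq \bigcup_{S \in \mathcal{F}} (T + S)$; I then fix any $v$ outside this union. For each $S$ we have $v \notin T + S$, and since $T \subseteq T + S$ this also gives $v \notin T$. Therefore $T + \langle v \rangle$ strictly contains $T$ yet still meets every $S$ trivially by the previous paragraph, contradicting maximality and forcing $\dim_K(T) \ge n - s$.

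Putting the two inequalities together yields $\dim_K(T) = n - s$. The only point that needs care is the equivalence used to feed Lemma \ref{l2}, namely that $v \notin T + S$ is the same as $\langle v \rangle \cap (T + S) = 0$ --- true because a nonzero one-dimensional subspace meets a subspace nontrivially only if it is contained in it. The rest is bookkeeping with the dimension formula, the real content having been isolated into Lemmas \ref{l1} and \ref{l2}, which is presumably why they were proved first.
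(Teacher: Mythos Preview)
Your proof is correct and follows essentially the same route as the paper's own argument: both halves match, with the upper bound coming from the dimension formula applied to a maximal $S_0\in\mathcal{F}$, and the lower bound obtained by assuming $\dim_K(T)<n-s$, using Lemma~\ref{l1} to find $v\notin\bigcup_{S\in\mathcal{F}}(T+S)$, and then invoking Lemma~\ref{l2} (with $A=T$, $B=S$, $C=\langle v\rangle$) to see that $T+\langle v\rangle$ still meets every $S$ trivially, contradicting maximality. The only cosmetic difference is that the paper phrases the upper bound as ``$\dim_K(S)\le n-t$ for all $S$'' before taking the maximum, whereas you fix a single $S_0$ realizing $s$; the content is identical.
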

\begin{proof}
Let $t=\dim_K(T)$. If $S\in\mathcal{F}$, then $T\cap S=0$, so 
\[n=\dim_KV\geq \dim_K(T+S)=\dim_K(T)+\dim_K(S)=t+\dim_K(S),\]
and thus $\dim_K(S)\leq n-t$ for all $S\in\mathcal{F}$. Thus $s\leq n-t$, and so $t\leq n-s$.

To complete the proof, we show that $t=n-s$. Otherwise, $t<n-s$, so $n>s+t$. Then
\[\dim_K(V)=n>s+t\geq\dim_K(S)+\dim_K(T)\geq\dim_K(S+T),\]
for all $S\in\mathcal{F}$. Then $S+T$ is a proper subspace of $V$ for all $S\in\mathcal{F}$. 

By Lemma \ref{l1}, there exists a vector $v\in V$ such that $v\notin S+T$, and thus $(S+T)\cap Kv=0$ for all $S\in\mathcal{F}$. Also, by the definition of $T$, we have $S\cap T=0$, and thus by Lemma \ref{l2}, we have $(Kv+T)\cap S=0$ for all $S\in\mathcal{F}$. Now $v\notin S+T$, so $v\notin T$, and thus $T \subsetneq Kv+T$. This contradicts the definition of $T$.
\end{proof}
\begin{corollary}
The primitive subspace theorem is a special case of Theorem \ref{th4}.
\end{corollary}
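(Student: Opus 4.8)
The plan is to obtain the primitive subspace theorem (Proposition \ref{th2.1}) by specializing Theorem \ref{th4} to the data $V = F$, regarded as an $n$-dimensional $K$-vector space, with $\mathcal{F}$ taken to be the collection of all proper intermediate fields $M$ of $K \subset F$, each viewed as a $K$-subspace of $F$. First I would verify that this $\mathcal{F}$ meets the hypotheses of Theorem \ref{th4}. Since $K \subset F$ is simple, the Primitive Element Theorem guarantees only finitely many intermediate fields, so $\mathcal{F}$ is finite; it is nonempty because $K$ itself is a proper intermediate field; and each such $M$ is a proper $K$-subspace of $F$. Under this identification one has $s = \max\{\dim_K M : M \in \mathcal{F}\} = \max\{[M:K] : M \text{ proper intermediate}\} = \psi(F,K)$.

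The key bridging step is to translate the two notions of ``good'' subspace into one another. I would show that a subspace $T \subseteq F$ is a primitive $K$-subspace if and only if $T \cap S = 0$ for every $S \in \mathcal{F}$. The point is that a nonzero $a \in F$ satisfies $K(a) = F$ precisely when $a$ lies in no proper intermediate field; hence $T$ is primitive (that is, $K(a) = F$ for all nonzero $a \in T$) exactly when $T$ meets every proper intermediate field only in $0$. Thus the transversality condition appearing in Theorem \ref{th4} is, in this setting, literally the primitivity condition.

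Finally I would reconcile ``maximum dimension'' with ``maximal with respect to inclusion.'' By definition $\phi(F,K)$ is the largest dimension of a primitive subspace, and primitive subspaces exist, since the one-dimensional span of a primitive element is primitive, so $\mathcal{A} \neq \varnothing$. A primitive subspace $T_0$ of maximal dimension is automatically maximal under inclusion among subspaces satisfying the transversality property: any strictly larger such subspace would have strictly larger dimension, contradicting maximality of $\dim_K T_0$. Theorem \ref{th4} then forces $\dim_K T_0 = n - s = n - \psi(F,K)$, so $\phi(F,K) = n - \psi(F,K)$, which is exactly $\psi(F,K) + \phi(F,K) = n$.

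I expect the only genuine content to be the dictionary between field-theoretic primitivity and the linear-algebraic transversality condition; once that equivalence and the finiteness of $\mathcal{F}$ are in place, the conclusion is just a reading of Theorem \ref{th4}. The one subtlety to keep an eye on is the distinction between inclusion-maximal transversal subspaces (about which Theorem \ref{th4} speaks) and dimension-maximal ones (which define $\phi$); but since a dimension-maximal transversal subspace is necessarily inclusion-maximal, this causes no difficulty, and I do not anticipate any serious obstacle.
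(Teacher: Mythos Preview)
Your proposal is correct and follows essentially the same route as the paper: specialize Theorem~\ref{th4} to $V=F$ with $\mathcal{F}$ the set of proper intermediate fields, identify $s=\psi(F,K)$, and read off $\phi(F,K)=n-s$. You are in fact more explicit than the paper in verifying finiteness of $\mathcal{F}$, in spelling out the equivalence ``primitive $\Leftrightarrow$ transversal to every $S\in\mathcal{F}$,'' and in reconciling inclusion-maximality with dimension-maximality, all of which the paper leaves implicit.
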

\begin{proof}
Let $K,F,n,\psi(F,K)$ and $\phi(F,K)$ be as Proposition \ref{th2.1}. Let $K$ be infinite, $K\subset F$ be simple, $\mathcal{F}$ be the collection of all proper intermediate subfields of $K \subset F$ and $s=\max\{[S:K]:S\in\mathcal{F}\}$. Assume that $T\subset F$ be a subspace maximal with the property that $T\cap S=0$, for all $S\in \mathcal{F}$. Then according to 
Theorem \ref{th4}, $\dim_K(T)=n-s$. On the other hand, we know that $\dim_K(T)=\phi(F,K)$ and $s=\psi(F,K)$. This implies the primitive subspace theorem for infinite fields, as claimed.
\end{proof}

\begin{remark}
The generalized primitive subspace theorem for infinite fields shall be reformulated for finite fields in some ways. Further investigation along those lines could prove to be worthwhile.
\end{remark}

\section{Partitioning Finite Fields}
Consider the field extension $\mathbb{F}_q\subset\mathbb{F}_{q^n}$, where $n\in \mathbb{N}$ and $q=p^{r}$ for some prime $p$ and $r \in \mathbb{N}$. Let $V$ be an $\mathbb{F}_q$-subspace of $\mathbb{F}_{q^n}$. We call a set $\mathcal{P}=\{W_i\}_{i=1}^\ell$ of $\mathbb{F}_q$-subspaces of $\mathbb{F}_{q^n}$ a \textit{partition} of $V$ if every non-zero element of $V$ is in $W_i$ for exactly one $i$. See \cite{(a)} for more results on partitions of finite vector spaces.

In the following theorem, we provide a partition for $\mathbb{F}_{q^n}$ using its primitive $\mathbb{F}_q$-subspaces.
\begin{theorem}
Let $M_1$, $W$ and $\psi(\mathbb{F}_{q^n},\mathbb{F}_q)$ be as in Theorem \ref{th2.2}. Assume that $W$ has a subspace partition $\{W_1,\ldots,W_l\}$, where $\dim_{\mathbb{F}_q}W_i=t_i\leq\psi(\mathbb{F}_{q^n},\mathbb{F}_q)$, for $1\leq i\leq l$. Then, for each $\alpha\in M_1\setminus\{0\}$, one can define a $t_i$-dimensional subspace $W_{i_\alpha}$ of $\mathbb{F}_{q^n}$ such that $W$, $M_1$ and the subspaces $W_{i_\alpha}$ form a partition of $\mathbb{F}_{q^n}$.
\end{theorem}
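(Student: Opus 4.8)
The plan is to exploit the direct sum structure that the setting of Theorem~\ref{th2.2} already supplies. Since $W$ is a primitive $\mathbb{F}_q$-subspace, every nonzero $w\in W$ satisfies $\mathbb{F}_q(w)=\mathbb{F}_{q^n}$, so $w$ cannot lie in the proper subfield $M_1$; hence $W\cap M_1=\{0\}$. Combined with $\dim_{\mathbb{F}_q}W=\phi(\mathbb{F}_{q^n},\mathbb{F}_q)=n-\psi(\mathbb{F}_{q^n},\mathbb{F}_q)$ and $\dim_{\mathbb{F}_q}M_1=\psi(\mathbb{F}_{q^n},\mathbb{F}_q)$, the dimensions add to $n$, so $\mathbb{F}_{q^n}=W\oplus M_1$. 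Writing each $v\in\mathbb{F}_{q^n}$ uniquely as $v=w+\gamma$ with $w\in W$ and $\gamma\in M_1$, the nonzero elements fall into three classes: those with $\gamma=0$ (covered by $W$), those with $w=0$ (covered by $M_1$), and the ``mixed'' ones with $w\neq0$ and $\gamma\neq0$. The whole task reduces to partitioning the mixed elements by the subspaces $W_{i_\alpha}$.

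For the construction, for each block $W_i$ I would fix an injective $\mathbb{F}_q$-linear map $\iota_i\colon W_i\to M_1$; such a map exists precisely because $t_i=\dim_{\mathbb{F}_q}W_i\le\psi(\mathbb{F}_{q^n},\mathbb{F}_q)=\dim_{\mathbb{F}_q}M_1$, and this is the only place the hypothesis $t_i\le\psi(\mathbb{F}_{q^n},\mathbb{F}_q)$ is used. Then, for each $\alpha\in M_1\setminus\{0\}$, I set
\[
W_{i_\alpha}=\{\,w+\alpha\,\iota_i(w):w\in W_i\,\},
\]
where $\alpha\,\iota_i(w)$ is the product taken inside the field $M_1$. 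This is the graph of the $\mathbb{F}_q$-linear map $w\mapsto\alpha\,\iota_i(w)$, hence an $\mathbb{F}_q$-subspace, and since its $W$-component recovers $w$, the map $w\mapsto w+\alpha\,\iota_i(w)$ is injective, so $\dim_{\mathbb{F}_q}W_{i_\alpha}=t_i$ as required.

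The verification that $W$, $M_1$, and the $W_{i_\alpha}$ form a partition is then a matter of checking that each nonzero $v=w+\gamma$ lies in exactly one piece. If $\gamma=0$ or $w=0$, one checks directly that $v$ avoids every $W_{i_\alpha}$: a nonzero $w\in W_i$ forces $\alpha\,\iota_i(w)\neq0$ because $\iota_i$ is injective and $\alpha\neq0$, so no element of $W_{i_\alpha}$ is purely in $W$ or purely in $M_1$. If $v$ is mixed, the partition $\{W_i\}$ of $W$ singles out the unique index $i$ with $w\in W_i$; then $\iota_i(w)$ is a nonzero, hence invertible, element of the field $M_1$, so the equation $\gamma=\alpha\,\iota_i(w)$ has the unique solution $\alpha=\gamma\,\iota_i(w)^{-1}$, which is nonzero. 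Blocks $W_{j_\beta}$ with $j\neq i$ are excluded automatically, since $w\neq0$ and $W_i\cap W_j=\{0\}$ give $w\notin W_j$. Thus $v$ lies in exactly one $W_{i_\alpha}$. A cardinality cross-check is reassuring: summing $q^{t_i}-1$ over the blocks gives $q^{\,n-\psi(\mathbb{F}_{q^n},\mathbb{F}_q)}-1$ because $\{W_i\}$ partitions $W$, and multiplying by the $q^{\,\psi(\mathbb{F}_{q^n},\mathbb{F}_q)}-1$ choices of $\alpha$ and adding the contributions of $W$ and $M_1$ recovers $q^n-1$.

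I expect the only genuinely creative step to be hitting on this construction: the requirement that each $W_{i_\alpha}$ be a subspace of the prescribed dimension $t_i$ rules out naive affine translates, and the device that makes everything fit is to scale a fixed embedding $\iota_i$ by $\alpha$ and use that multiplication by a nonzero element of the field $M_1$ is a bijection. Everything after that is bookkeeping.
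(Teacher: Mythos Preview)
Your construction is exactly the one in the paper: the authors also fix, for each $i$, an injective $\mathbb{F}_q$-linear map $T_i\colon W_i\to M_1$ and set $W_{i_\alpha}=\{w+\alpha\,T_i(w):w\in W_i\}$, then assert that the partition property follows from $W_i\cap M_1=\{0\}$ and $W_i\cap W_j=\{0\}$. Your write-up in fact supplies the verification details (the direct-sum decomposition $\mathbb{F}_{q^n}=W\oplus M_1$, the uniqueness of $\alpha=\gamma\,\iota_i(w)^{-1}$, and the cardinality check) that the paper leaves to the reader.
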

\begin{proof}
For each subspace $W_i$, $1\leq i\leq l$, let $T_i$ be a 1-1 linear transformation frow $W_i$ into $M_1$. For each $\alpha\in M_1\setminus\{0\}$, we associate with it  the following set:
\begin{align*}
W_{i_\alpha}=\{w+\alpha T_i(w):\ w\in W_i\}.
\end{align*}
Since $W_i\cap M_1=\{0\}$ and $W_i\cap W_j=\{0\}$, for $i\neq j$, one can easily verify that $W_{i_\alpha}$'s, $M_1$ and $W$ form a partition of $\mathbb{F}_{q^n}$ into  subspaces.
\end{proof}

\section*{Acknowledgment}
I am deeply grateful to Prof. Shmuel Friedland for his constant encouragement and for many insightful conversations.


\begin{thebibliography}{99}
\bibitem{1}
S. Akbari, S. Khalashi Ghezelahmad, E. Yaraneri, Modules with finitely many submodules, Algebra Colloq. 23(2016), no. 3, 463-468.

\bibitem{1.2}
M. Aliabadi, M.V. Janardhanan, On matchable subsets in abelian groups and their linear analogues. {\it Linear Algebra Appl}. 582(2019), 138-155.

\bibitem{2}
Ch. Bachoc, O.Serra, G. Z\'emor, Revisiting Kneser's theorem for field extensions. {\it Combinatorica} 38(2018), no. 4, 759--777.
\bibitem{2n}
J. Brzezi\'nski, {\it Galois Theory Through Exercises}, Springer, Cham, 2018.
\bibitem{*}
Sh. Friedland, M. Aliabadi, {\it Linear Algebra and Matrices}, SIAM, Philadelphia, PA, 2018.
\bibitem{(a)}
O. Heden, On partitions of finite vector spaces of small dimensions. {\it Arch. Math (Basel)} 43(1984), no. 6, 507--509.
\bibitem{(a.1)}
J. Luh, On the representation of vector spaces as a finite union of subspaces, {\it Acta Math. Acad. Sci. Hungar}. 23 (1972)341–342.

\end{thebibliography}
\end{document}